\documentclass[11pt]{amsart}
\usepackage{amsmath, amsthm, amssymb, latexsym,url}
\usepackage{hyperref}

%New margins to widen a bit
\addtolength{\oddsidemargin}{-.5in}
\addtolength{\evensidemargin}{-.5in}
\addtolength{\textwidth}{1in}

\allowdisplaybreaks

\author{J. Vandehey}
\thanks{Email: \href{mailto:vandehey@uga.edu}{\nolinkurl{vandehey@uga.edu}}}
\title[Normal numbers for L\"uroth series]{A simpler normal number construction for simple L\"uroth series}
\date{\today}
\keywords{Normal numbers}
\subjclass[2010]{Primary: 11K16}

\newtheorem{thm}{Theorem}[section]

\newtheorem{lem}[thm]{Lemma}

\newtheorem{prop}[thm]{Proposition}

\theoremstyle{definition}
\newtheorem{defn}[thm]{Definition}

\newcommand{\x}{\mathbf{x}}
\newcommand{\uu}{\mathbf{u}}
\newcommand{\p}{\mathbf{p}}
\newcommand{\m}{\mathbf{m}}

\begin{document}

\maketitle

\begin{abstract}
Champernowne famously proved that the number \[0.(1)(2)(3)(4)(5)(6)(7)(8)(9)(10)(11)(12)...\] formed by concatenating all the integers one after another is normal base $10$. We give a generalization of Champernowne's construction to various other digit systems, including generalized L\"uroth series with a finite number of digits. For these systems, our construction simplifies a recent construction given by Madritsch and Mance. Along the way we give an estimation of the sum of multinomial coefficients above a tilted hyperplane in Pascal's simplex, which may be of general interest.
\end{abstract}

\section{Introduction}

A number $x\in [0,1)$ with base $b$ expansion $x=0.d_1d_2d_3\dots$ is said to be normal base $b$ if for any string $s=a_1 a_2\dots a_k$ of base $b$ digits, we have
\[
\lim_{N\to \infty} \frac{\#\{ 0 \le n \le N-k\mid d_{n+i} = a_i, \quad 1 \le i \le k\}}{N} = b^{-k}.
\]
This may be interpreted as saying that for a normal number $x$, each digit string appears with the same relative frequency as every other digit string with the same length.

While many methods (most notably the Birkhoff Ergodic Theorem) can be used to show that almost all real numbers $x\in [0,1)$ are normal for any fixed base $b$, we know of very few examples of normal numbers. None of the well-known irrational constants, such as $e$ or $\pi$, are known to be normal to any base, and the only examples we have of normal numbers are those explicitly constructed to be normal. The first and still most famous of these constructions is Champernowne's constant \cite{Champernowne}, which in base $10$ looks like
\[
0.(1)(2)(3)(4)(5)(6)(7)(8)(9)(10)(11)(12)(13)\dots,
\]
formed by concatenating all the integers in succession. He derived this construction after proving the base $10$ normality of the following number
\begin{equation}\label{eq:champernowne}
0.(0)(1)(2)(3)(4)(5)(6)(7)(8)(9)(00)(01)(02)(03)\dots,
\end{equation}
formed by concatenating all base $10$ digit strings of length $1$ in lexicographical order, then all the digit strings of length $2$ in lexicographical order, and so on.

Constructions for base $b$ normal numbers usually fall into one of three methods: the combinatorial method first introduced by Copeland and Erd\H{o}s \cite{CE}, that is perhaps the most natural generalization of Champernowne's techniques; the exponential sum method first introduced by Davenport and Erd\H{o}s \cite{DE}; and the method of pseudo-random number generators used most powerfully by Bailey and Crandall \cite{BC1,BC2}.

Recently, mathematical interest has turned to providing constructions of normal numbers in other systems. In many cases, these proofs draw from techniques used by Champernowne, Copeland, and Erd\H{o}s. We shall be concerned here with ergodic fibred systems \cite{Schweiger}. Common examples of fibred systems include base $b$ expansions, continued fraction expansions, generalized L\"{u}roth series, and $\beta$-expansions. 

\begin{defn}Ergodic fibred systems consist with a transformation $T$ that maps a set $\Omega$ to itself, a measure $\mu$ on $\Omega$ that is finite and $T$-invariant, a digit set $\mathcal{D}\subset\mathbb{N}$, and a countable collection of disjoint subsets $\{I_d\}_{d\in \mathcal{D}}$ such that $\mu$-almost every point in $\Omega$ is in some $I_d$. The map $T$ is injective on each subset $I_d$ and $T$ is ergodic with respect to $\mu$.

The $T$-expansion of a point $x\in \Omega$ is then given by $x=[d_1,d_2,d_3,\dots]$ where $d_n$ is defined by $T^{n-1}x \in I_{d_n}$. For a given fibred system, we say a point $x\in\Omega$ with expansion $x=[d_1, d_2, d_3, \dots]$ is $T$-normal if for any string $s=[a_1, a_2, \dots, a_k]$ of digits from $\mathcal{D}$ we have
\begin{equation}\label{eq:mainlimit}
\lim_{N\to \infty} \frac{\#\{ 0 \le n \le N-k\mid d_{n+i} = a_i, \quad 1 \le i \le k\}}{N} = \mu(C[s]).
\end{equation}
where $C[s]$ is the cylinder set for the string $s$, i.e.,
\[
C[s]= \{x=[d_1, d_2, \dots] \mid d_i=a_i, \quad 1\le i \le k\}.
\]

We will often denote the measure of a cylinder set $s$ by $\lambda_s$, and if $s$ consists of a single digit $d$, then we will often shorthand the measure of the set $C[d]$ by $\lambda_d$.
\end{defn}

Madritsch and Mance \cite{MM} recently provided a normal number construction that works for many ergodic fibred systems, including those listed above. Their construction works roughly as follows:
\begin{enumerate}
\item Let $\epsilon_k$ be some small positive number shrinking to $0$ very quickly as $k$ increases, and let $S_k=\{s_1, s_2, s_3, \dots, s_n\}$ be a set enumerating all strings of length $k$ whose corresponding cylinder sets have measure at least $\epsilon_k$.
\item Let $M_k$ be at least $1/\epsilon_k$, and construct a string $X_k$ formed by concatenating first $\lfloor M_k \lambda_{s_1}\rfloor$ copies of $s_1$, then $\lfloor M_k \lambda_{s_2} \rfloor$ copies of $s_2$ and so on until ending with $\lfloor M_k \lambda_{s_n} \rfloor$ copies of $s_n$. By construction, we expect that for strings $s$ with length much smaller than $k$ that $s$ should appear in $X_k$ with close to the correct frequency.
\item We chose a quickly growing sequence $l_k$ and construct a digit $x$ by first concatenating $l_1$ copes of $X_1$, then $l_2$ copies of $X_2$, and so forth. The $l_k$'s are chosen so that $l_k$ copies of $X_k$ are vastly longer than the concatenated copies of $X_1$ up to $X_{k-1}$ that precede it.
\end{enumerate}
The strings $X_k$ are constructed to have better and better small-scale normality properties and then are repeated so many times in the construction of $x$ that their behavior swamps the behavior of what came before them. This construction was based on earlier work of Altomare and Mance \cite{AM}, and Mance \cite{M1,M2} independently. The construction also bears resemblence to an earlier, but less general construction of Martinelli \cite{Martinelli}, although their results appear to be independent.

The advantage of the Madritsch-Mance construction is that it is extremely general, working even for the notoriously difficult $\beta$-expansions. The disadvantage of the Madritsch-Mance construction is its inefficency. For example, if we apply the Madritsch-Mance construction to create a normal number base $10$, it, like Champernowne's secondary construction \eqref{eq:champernowne}, concatenates every digit string at some point; however, while Champernowne's second construction  uses each digit string exactly $1$ time, the Madritsch-Mance construction concatenates a string of length $k$ at least $k^{2k}\log k$ times.

Our goal in this paper is to construct and prove a much simpler normal number construction that, like Champernowne's construction, uses each digit string one time.

\begin{defn}\label{defn:xs}
Given an ergodic fibred system, let $S= \{s_n\}_{n\in \mathbb{N}}$ be an enumeration of all possible finite length strings ordered according to the following rule:
If $\lambda_{s_i} > \lambda_{s_j}$, then $i<j$. (We do not care how strings whose cylinder sets have the same measure are ordered compared to one another. Although, if we want a rigorous definition of $S$, we may impose a lexicographical order on these strings.)

Let $x_S$ be the point constructed by concatenating the strings $s_i$ in order.
\end{defn}

Note that if we consider a base $10$ fibred system and impose a lexicographical ordering on those strings in $S$ whose cylinder sets have the same measure, then we in fact get Champernowne's second construction \eqref{eq:champernowne} precisely. Therefore the construction of $x_S$ given in Definition \ref{defn:xs} is a true generalization of Champernowne's construction to more ergodic fibred systems.

Our goal in this paper will be to prove the following statement.

\begin{thm}\label{thm:main}
Consider an ergodic fibred system generated by a transformation $T$ such that $\mathcal{D}$ is finite and such that for each string $s=[a_1, a_2, \dots, a_k]$, we have $
\lambda_s \asymp \lambda_{a_1} \lambda_{a_2} \dots \lambda_{a_k}
$.

For such a system, the number $x_S$ constructed in Definition \ref{defn:xs} is $T$-normal.
\end{thm}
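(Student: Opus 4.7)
Fix a target string $t = [t_1, \ldots, t_\ell]$ and let $b = |\mathcal{D}|$. For each string $s$, write $n_d(s)$ for the number of occurrences of the digit $d \in \mathcal{D}$ in $s$, $N(s, t)$ for the number of occurrences of $t$ as a substring of $s$, $r(s) = N(s, t)/|s|$, and $f(s) = \sum_{d \in \mathcal{D}} n_d(s)\log(1/\lambda_d)$. The multiplicativity hypothesis gives $-\log\lambda_s = f(s) + O(1)$.

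Decompose the prefix of $x_S$ of length $N$ as $s_1 s_2 \cdots s_M$ plus at most one partial final block. Occurrences of $t$ in the prefix either lie inside a single block or straddle a boundary; the straddling ones number at most $(M+1)\ell$. Because $\mathcal{D}$ is finite, one has $|s_i| \gtrsim \log i$, so $M = o(N)$ and the straddling term is $o(N)$. The problem thus reduces to showing that the $|s|$-weighted average
\[
\frac{\sum_{i \le M} N(s_i, t)}{\sum_{i \le M} |s_i|} \;=\; \frac{\sum_{i \le M} |s_i|\, r(s_i)}{\sum_{i \le M} |s_i|}
\]
converges to $\lambda_t$ as $M \to \infty$.

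I would prove this by showing that almost all of the weight $\sum_{i \le M}|s_i|$ is carried by strings whose length is close to a dominant $k^\ast = k^\ast(M) \to \infty$ and whose empirical digit profile is close to $(k^\ast\lambda_d)_{d}$, and that on such strings the weighted average of $r(s)$ tends to $\lambda_t$. For the first point, the definition of $S$ together with the multiplicativity hypothesis identifies the first $M$ strings, up to a bounded slack in the threshold, with the tilted half-space $\{s : f(s) \le L\}$ for some $L = L(M) \to \infty$. Stirling's formula on the constrained multinomial count
\[
Z_k(L) \;=\; \!\!\sum_{\substack{(n_d):\, \sum_d n_d = k\\ \sum_d n_d \log(1/\lambda_d) \le L}}\!\! \binom{k}{n_1, \ldots, n_b},
\]
combined with the exponential family $p_d(\beta) \propto \lambda_d^\beta$, yields $Z_k(L) \asymp k^{-(b-1)/2} e^{k \tilde H(L/k)}$, where $\tilde H(\alpha) = \max\{H(p) : \sum_d p_d\log(1/\lambda_d) \le \alpha\}$. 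A Laplace expansion of $F(L) = \sum_k k\, Z_k(L)$ in $k$ singles out the dominant length $k^\ast = L/H(\lambda)$, at which $\beta = 1$ and $p(\beta) = \lambda$, and a Gaussian expansion there confines almost all of the weight to strings of length $\approx k^\ast$ with digit profile within $O(\sqrt{k^\ast})$ of $k^\ast\lambda$. The second point follows from the elementary identity $\sum_{s:\,|s|=k,\,\mathrm{profile}=(n_d)} N(s,t) = (k-\ell+1)\binom{k-\ell}{n_d - n_d(t)}$, which after Stirling shows that the $|s|$-weighted average of $r(s)$ over length-$k$ strings of profile $(n_d)$ tends to $\prod_d \lambda_d^{n_d(t)}$, equal to $\lambda_t$ up to the multiplicativity slack, whenever $(n_d)$ is close to $k\lambda$.

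The main obstacle is the tilted hyperplane estimate itself: a local central limit theorem for the multinomial $\binom{k}{n_d}\prod_d \lambda_d^{n_d}$ above a tilted hyperplane, uniform in both $k$ and the tilt parameter $\beta$, precise enough that the polynomial prefactor in $F(L)$ cancels in the ratio $F(L - c)/F(L)$ (equivalently, confining the $|s|$-weight to strings with profile close to $k\lambda$). This is the standalone proposition the abstract promises; once it is in hand, the remainder of the argument is routine Stirling analysis and boundary bookkeeping.
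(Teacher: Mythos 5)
The core gap is that you set out to compute the \emph{exact} limiting frequency, while the theorem's hypothesis only gives $\lambda_s \asymp \lambda_{a_1}\cdots\lambda_{a_k}$, and your reduction is valid only up to multiplicative constants at every stage. Identifying the first $M$ blocks with a half-space $\{s : f(s)\le L\}$ ``up to a bounded slack in the threshold'' is not harmless: strings with $f(s)$ in a window $[L, L+O(1)]$ carry a \emph{positive proportion} of the total digit count (the relevant sums grow like $|\log\epsilon|/\epsilon$, so a bounded shift of the threshold changes them by a constant factor, not by $1+o(1)$), and similarly your terminal value $\prod_d \lambda_d^{n_d(t)}$ agrees with $\lambda_t$ only up to the $\asymp$ constants. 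Hence your argument can at best yield $\limsup \le c\,\lambda_t$ and $\liminf \ge c'\lambda_t$ uniformly in $t$, never the exact limit in \eqref{eq:mainlimit} that $T$-normality requires. The missing idea is the Pjatetskii-Shapiro hot spot criterion (Theorem~\ref{thm:PS}): the paper proves only a uniform upper bound $\limsup \le c\,\lambda_s$, and the hot spot theorem upgrades that to exact normality, which is precisely what makes all the constant-factor slack (threshold ambiguity, the $\asymp$ hypothesis, straddling and partial-block terms) irrelevant. Without some such criterion your plan cannot close under the stated hypothesis.

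Even in the exactly multiplicative (L\"uroth) case, where the constant does match, one further point is unaddressed: for a general $N$ the enumeration sits in the middle of a class of equal-measure strings whose internal order is arbitrary by Definition~\ref{defn:xs}, and such a class can carry a constant fraction of all digits written so far (e.g.\ all length-$k$ strings in the base-$b$ case); reducing to complete half-spaces ignores the possible bias from an adversarially ordered, partially completed class, which needs an explicit (large-deviation type) argument. On the positive side, your analytic core --- the Laplace/local-CLT analysis of multinomial mass near the tilted hyperplane, with concentration at length $k^\ast = L/H(\lambda)$ and profile proportional to $\lambda$ --- is essentially the paper's Theorem~\ref{thm:sbound} as proved in Sections~\ref{sec:Hreduction}--\ref{sec:Hbound}; the difference is that the paper only needs it as a two-sided order-of-magnitude bound, exactly because the hot spot theorem spares it the sharp constants your exact-limit version would demand.
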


The simplest example of such a fibred system are the generalized L\"uroth series with finitely many digits, where we have, in fact, $\lambda_s = \lambda_{a_1} \lambda_{a_2} \dots \lambda_{a_k}$.
A good introduction to generalized L\"uroth series is given in  section 2.3 of \cite{DK}.

We note that for some fibred systems, there may not be a point $x\in \Omega$ with $T$-expansion given by $x_S$. This is due to the possibility of inadmissable strings, strings $s$ such that $\lambda_s=0$. $\beta$-expansions, in particular, have many inadmissable strings, and in the Madritsch-Mance construction, they get around this obstruction by including padding, a long, but finite string of $0$'s inserted before each concatenated string $s_i$. 

However, the condition in Theorem \ref{thm:main} that $
\lambda_s \asymp \lambda_{a_1} \lambda_{a_2} \dots \lambda_{a_k}
$ guarantees that no inadmissable strings exist. 

We leave as an open question---since we do not yet have enough information to be willing to state it as a conjecture---whether this construction works for other fibred systems, including Generalized L\"uroth Series with an infinite number of digits, continued fraction expansions, and (with an appropriate padding, \`a la Madritsch-Mance) $\beta$-expansions.

In the proof we shall make use of the following theorem, known alternately as Pjatetskii-Shapiro normality criterion or the hot spot theorem \cite{BM,MS}.

\begin{thm}[Pjatetskii-Shapiro]\label{thm:PS}
A point $x$ with expansion $x=[d_1, d_2, d_3, \dots]$ is $T$-normal if for any string $s=[a_1, a_2, \dots, a_k]$ we have
\[
\limsup_{N\to \infty} \frac{\#\{ 0 \le n \le N-k\mid d_{n+i} = a_i, \quad 1 \le i \le k\}}{N} \le c \cdot \lambda_s.
\] 
for some constant $c$ that is uniform over all strings.
\end{thm}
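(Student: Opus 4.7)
The plan is to deduce the equality \eqref{eq:mainlimit} from the hypothesized upper bound via a compactness-and-ergodicity argument: every subsequential limit of the empirical digit frequencies will turn out to be a $T$-invariant probability measure absolutely continuous with respect to $\mu$, which by ergodicity must coincide with $\mu$ itself. Since there are only countably many finite strings over $\mathcal{D}$, a diagonal extraction produces a subsequence $(N_j)_{j\ge 1}$ along which the counting frequency $A_{N_j}(s)/N_j$ converges for every finite string $s$, where $A_N(s)$ denotes the counting function appearing in the hypothesis. Denote the common limits by $\nu(C[s])$; by hypothesis $0 \le \nu(C[s]) \le c\,\lambda_s$.

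The first step is to promote $s \mapsto \nu(C[s])$ to a $T$-invariant Borel probability measure. Fixing $k$, the identity $\sum_{|s|=k} A_N(s) = N - k + 1$ yields $\sum_{|s|=k} \nu(C[s]) = 1$ after dividing by $N$ and passing to the limit along $(N_j)$, so $\nu$ is finitely additive on each cylinder partition; the interchange of sum and limit is immediate for finite $\mathcal{D}$ and follows in the countable case from the domination $\nu(C[s]) \le c\,\lambda_s$ together with $\sum_s \lambda_s = 1$ and a standard tightness argument. The $T$-invariance identity $\nu(C[s]) = \sum_{a\in\mathcal{D}} \nu(C[as])$ comes from the pointwise relation $A_N(s) = \sum_{a} A_N(as) + O(1)$ by the same limiting process. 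Kolmogorov/Carath\'eodory extension on the cylinder algebra then promotes $\nu$ to a genuine $T$-invariant Borel probability measure.

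The second step is purely ergodic-theoretic. The inequality $\nu(C[s]) \le c\,\lambda_s$ extends from cylinders to arbitrary Borel sets by monotone convergence, so $\nu \ll \mu$ with Radon--Nikodym derivative $\rho = d\nu/d\mu$ bounded above by $c$ almost surely. The common $T$-invariance of $\mu$ and $\nu$ forces $\rho \circ T = \rho$ $\mu$-almost everywhere, and the ergodicity of $\mu$ then forces $\rho$ to be $\mu$-almost everywhere constant. Since $\nu(\Omega) = \mu(\Omega) = 1$, this constant equals $1$, hence $\nu = \mu$ and $\nu(C[s]) = \lambda_s$ for every string $s$.

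Because this conclusion applies to every subsequential limit of $(A_N(s)/N)$, the full sequence converges to $\lambda_s$, which is precisely \eqref{eq:mainlimit}, so $x$ is $T$-normal. The main obstacle is step one: one must verify both that the finitely additive set function on cylinders really extends to a countably additive probability (automatic when $\mathcal{D}$ is finite, but needing a tightness argument in the countable case to rule out escape of mass), and that the $O(1)$ boundary term in the $T$-invariance relation genuinely disappears in the limit so as to yield true invariance rather than merely sub-invariance.
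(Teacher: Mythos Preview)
The paper does not supply a proof of this theorem at all: Theorem~\ref{thm:PS} is quoted from the literature with citations \cite{BM,MS} and then used as a black box in Section~\ref{sec:counting}. So there is no ``paper's own proof'' to compare against; your proposal is a proof of a result the author deliberately imported.

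That said, your argument is the standard one and is essentially correct. Two places deserve a sentence more of care. First, the step ``common $T$-invariance forces $\rho\circ T=\rho$'' is true but not immediate for non-invertible $T$: what invariance of both measures gives directly is $E_\mu(\rho\mid T^{-1}\mathcal{B})=\rho\circ T$, and one then needs either the boundedness $\rho\le c$ together with a Jensen/strict-convexity argument, or simply the Birkhoff theorem applied to $\mu$ (which, via $\nu\ll\mu$, forces the ergodic averages to converge $\nu$-a.e.\ to $\int f\,d\mu$, hence $\nu=\mu$). Second, your opening diagonalization fixes one subsequence, but the final paragraph needs the conclusion for \emph{every} subsequential limit; the clean phrasing is to start from an arbitrary subsequence along which $A_N(s)/N$ converges to something other than $\lambda_s$, pass to a further diagonal subsequence, and derive a contradiction. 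With those two clarifications your write-up would be a complete proof of the cited criterion.
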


This normality criterion is quite useful because it means that instead of having to prove a precise asymptotic for the counting function on the left-hand side of \eqref{eq:mainlimit}, we need only know its value up to a constant multiple.

We will need another result on a sum of multinomial coefficients, which we present here.
Define the set $T_\epsilon$ by
\[
T_\epsilon =\left\{ \x = (x_1,\dots, x_D) \in \mathbb{R}^D\middle| \lambda_1^{x_1}\lambda_2^{x_2}\dots\lambda_D^{x_D}\ge \epsilon, x_i \ge 0, 1\le i \le D\right\}.
\]
We will use $\m=(m_1, m_2,\dots, m_D)\in \mathbb{Z}^D$ to denote an integer lattice point.
Then define
\begin{equation}\label{eq:S1def}
S(\epsilon)=\sum_{\m\in T_\epsilon} (m_1+m_2+\dots+m_D) \frac{(m_1+m_2+\dots+m_D)!}{m_1!m_2!\dots m_D!}
\end{equation}
and
\begin{equation}\label{eq:S2def}
S^\#(\epsilon) = \sum_{\m\in T_\epsilon} \frac{(m_1+m_2+\dots+m_D)!}{m_1!m_2!\dots m_D!}.
\end{equation}

\begin{thm}\label{thm:sbound}
We have
\[
S(\epsilon) \asymp \frac{| \log \epsilon|}{\epsilon} \qquad S^\#(\epsilon) \asymp \frac{1}{\epsilon}
\]
as $\epsilon$ tends to $0$.
\end{thm}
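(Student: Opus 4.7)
The plan is to reinterpret $S(\epsilon)$ and $S^\#(\epsilon)$ combinatorially and then exploit a tree-pruning structure. Given a finite string $s=[a_1,\dots,a_n]$ over the alphabet $\{1,\dots,D\}$, let $m_i(s)$ count the occurrences of the letter $i$ in $s$ and set $\lambda_s:=\prod_{i=1}^D \lambda_i^{m_i(s)}$. Since the multinomial coefficient $\binom{n}{\m}$ counts the strings of length $n$ with composition $\m$, the sums in \eqref{eq:S1def} and \eqref{eq:S2def} rewrite as
\[
S^\#(\epsilon)=\#\{s:\lambda_s\geq\epsilon\}, \qquad S(\epsilon)=\sum_{s:\lambda_s\geq\epsilon} |s|.
\]
The set $\mathcal{T}_\epsilon:=\{s:\lambda_s\geq\epsilon\}$ is prefix-closed (deleting a letter raises $\lambda_s$ since each $\lambda_i<1$), so it forms a finite subtree of the full $D$-ary word tree, rooted at the empty string.

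I would then introduce the external boundary $\mathcal{L}_\epsilon$ consisting of all strings $s\notin\mathcal{T}_\epsilon$ whose immediate parent lies in $\mathcal{T}_\epsilon$. Geometrically, $\mathcal{L}_\epsilon$ is the ``first-passage'' antichain at which the decreasing prefix-measure sequence drops below $\epsilon$, so its cylinder sets partition $\Omega$ and $\sum_{s\in\mathcal{L}_\epsilon}\lambda_s=1$. The definition forces $\epsilon\lambda_{\min}\leq \lambda_s<\epsilon$ for every $s\in\mathcal{L}_\epsilon$, where $\lambda_{\min}:=\min_i \lambda_i$, yielding immediately
\[
\frac{1}{\epsilon} < |\mathcal{L}_\epsilon| \leq \frac{1}{\epsilon\,\lambda_{\min}}.
\]
Counting the $D|\mathcal{T}_\epsilon|$ children of nodes in $\mathcal{T}_\epsilon$ two ways (each child is either a non-root element of $\mathcal{T}_\epsilon$ or an element of $\mathcal{L}_\epsilon$) then gives the identity $|\mathcal{L}_\epsilon|=(D-1)|\mathcal{T}_\epsilon|+1$, whence $S^\#(\epsilon)=|\mathcal{T}_\epsilon|\asymp 1/\epsilon$.

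For $S(\epsilon)$, the upper bound follows from $\lambda_s\leq \lambda_{\max}^{|s|}$ (with $\lambda_{\max}:=\max_i\lambda_i$), which forces $|s|\leq |\log\epsilon|/|\log\lambda_{\max}|$ on $\mathcal{T}_\epsilon$ and therefore $S(\epsilon)\ll |\log\epsilon|\cdot S^\#(\epsilon)\ll |\log\epsilon|/\epsilon$. For the matching lower bound I use the complementary inequality $|\log\lambda_s|\leq |s|\cdot|\log\lambda_{\min}|$: every $s\in\mathcal{L}_\epsilon$ satisfies $|\log\lambda_s|>|\log\epsilon|$ and so $|s|>|\log\epsilon|/|\log\lambda_{\min}|$. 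Mapping each $s\in\mathcal{L}_\epsilon$ to its parent in $\mathcal{T}_\epsilon$ is at most $D$-to-$1$, so it produces at least $|\mathcal{L}_\epsilon|/D\asymp 1/\epsilon$ distinct nodes of $\mathcal{T}_\epsilon$ each of depth $\gg|\log\epsilon|$, and therefore $S(\epsilon)\gg |\log\epsilon|/\epsilon$.

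The only conceptually delicate point is realizing that the sharp $1/\epsilon$ count for $S^\#(\epsilon)$ cannot come from a naive length-by-length sum: at each level $n$ one has only the trivial bound $\#\{|s|=n:\lambda_s\geq\epsilon\}\leq 1/\epsilon$, and summing over the $\asymp|\log\epsilon|$ active values of $n$ loses an extra logarithmic factor. The antichain $\mathcal{L}_\epsilon$ fixes this: it is a single-scale family (measures pinned to $\asymp\epsilon$) and simultaneously a partition of $\Omega$, so the probabilistic identity $\sum_{s\in\mathcal{L}_\epsilon}\lambda_s=1$ converts directly into the sharp count, and the tree identity transfers it to $\mathcal{T}_\epsilon$.
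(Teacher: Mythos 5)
Your proposal is correct, and it takes a genuinely different---and considerably more elementary---route than the paper. The paper never treats the region $T_\epsilon$ directly as a set of words: it first sandwiches $S(\epsilon)$ and $S^\#(\epsilon)$ between sums $H$, $H^\#$ taken over the tilted hyperplane $\mathcal{H}_\epsilon$ itself (Lemma \ref{lem:htos}), and then estimates those via Stirling's formula and the Laplace method, locating the maximum of the entropy-type function $\tilde{F}$ at $(\lambda_1 L,\dots,\lambda_D L)$, where its value is $-\log\epsilon$ (Lemma \ref{lem:Fmax}), and summing a Gaussian approximation over a box around that point (Lemmas \ref{lem:hbound} and \ref{lem:sumtointegral}). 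You instead read the multinomial sums as word counts, note that $\mathcal{T}_\epsilon=\{s:\lambda_s\ge\epsilon\}$ is a finite prefix-closed subtree, and obtain the sharp count from the Kraft-type identity $\sum_{s\in\mathcal{L}_\epsilon}\lambda_s=1$ on the exit antichain, combined with the pinning $\epsilon\lambda_{\min}\le\lambda_s<\epsilon$ and the child-counting identity $|\mathcal{L}_\epsilon|=(D-1)|\mathcal{T}_\epsilon|+1$; the bounds on $S(\epsilon)$ then follow from trivial depth estimates on $\mathcal{T}_\epsilon$ and on the parents of $\mathcal{L}_\epsilon$. Two points you should make explicit: (i) your argument uses $\sum_{i=1}^D\lambda_i=1$ and $D\ge 2$ (so each $\lambda_i<1$); this is legitimate since the $\lambda_i$ are the measures of the digit cylinders of a probability-preserving system, and the paper relies on the same normalization implicitly in evaluating $\tilde{F}(\mathbf{p})=-\log\epsilon$; (ii) the identity $\sum_{s\in\mathcal{L}_\epsilon}\lambda_s=1$ is cleanest proved combinatorially (repeatedly replace a node of $\mathcal{T}_\epsilon$ in the current antichain by its $D$ children, which conserves total weight since $\lambda_s=\sum_i\lambda_s\lambda_i$), avoiding any appeal to null sets in $\Omega$. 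What the paper's heavier machinery buys is finer information: the Laplace analysis shows where the mass concentrates and estimates the hyperplane sums $H(\epsilon)$, $H^\#(\epsilon)$ themselves (the ``Pascal simplex'' estimate advertised in the abstract), which your argument does not recover; what your argument buys is a short, self-contained proof of Theorem \ref{thm:sbound} with explicit constants and no asymptotic analysis, which is all that the normality argument in Section \ref{sec:counting} actually requires.
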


The proof of Theorem \ref{thm:main} will be broken down into the following steps.
\begin{enumerate}
\item In Section \ref{sec:counting}, we shall apply a counting argument to express
\[
\#\{ 0 \le n \le N-k\mid d_{n+i} = a_i, \quad 1 \le i \le k\} /N
\]
in terms of the sums $S(\epsilon)$ and $S^\#(\epsilon)$, so that Theorem \ref{thm:main} is a simple consequence of Theorem \ref{thm:sbound}.
\item In Sections \ref{sec:Hreduction} and \ref{sec:htos}, we will show that the bounds in Theorem \ref{thm:sbound} follow from bounds on similar sums, where $T_\epsilon$ is replaced by a hyperplane segment
\[
\mathcal{H}_\epsilon :=\left\{\x=(x_1, x_2, \dots, x_D) \middle| \sum_{i=1}^D x_i \log \lambda_i = \log \epsilon, \quad x_i\ge 0, \forall i\right\}.
\]
\item In Section \ref{sec:Hbound}, we analyze the size of the resulting sum over $\mathcal{H}_\epsilon$ by applying the Laplace method (see \cite{dB} for more details).
\end{enumerate}

In this paper we will frequently use Landau and Vinogradov asymptotic notations, such as $\ll$, $\gg$, $\asymp$, big-$O$, and little-$o$, all with the usual meanings.

%%%%%%%%%%%%%%%%%
\section{Some additional results}
%%%%%%%%%%%%%%%%

We need a few general lemmas, which we will present here.

\begin{lem}\label{lem:gammagrowth}
Let $1<x<y$ and suppose that $0 < \delta < \min\{1,x-1\}$, then we have, uniformly in all variables
\[
\frac{\Gamma(y-\delta)}{\Gamma(x-\delta)} \ll \frac{\Gamma(y)}{\Gamma(x)} \ll \frac{\Gamma(y+\delta)}{\Gamma(x+\delta)} \quad \text{ and } \quad x\pm \delta \asymp x.
\]
\end{lem}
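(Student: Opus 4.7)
The plan is to derive both conclusions from elementary monotonicity arguments; no Stirling-type asymptotics are needed. I would first handle the easier claim $x \pm \delta \asymp x$. The hypothesis $\delta < \min\{1, x-1\}$ implies $\delta < x/2$: in the case $1 < x \le 2$ we have $\delta < x - 1 \le x/2$, and in the case $x > 2$ we have $\delta < 1 < x/2$. Hence $x/2 < x - \delta < x + \delta < 3x/2$, which gives the required $\asymp$ with absolute constants, and, crucially for the rest of the proof, ensures that $x - \delta > 1$, so we stay safely to the right of the pole of $\Gamma$ at $0$.

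For the Gamma ratios, the key observation is that the function
\[
f(t) := \log \Gamma(y + t) - \log \Gamma(x + t)
\]
is monotone increasing in $t$. Differentiating gives $f'(t) = \psi(y+t) - \psi(x+t)$, where $\psi = \Gamma'/\Gamma$ is the digamma function. Since the trigamma function satisfies $\psi'(s) = \sum_{n\ge 0}(n+s)^{-2} > 0$ on $(0,\infty)$, $\psi$ is strictly increasing there, and since $y > x$ we obtain $f'(t) > 0$ throughout. By the previous paragraph $x + t > 1$ for all $t \in [-\delta, \delta]$, so $f$ is defined and differentiable on this interval, and monotonicity yields $f(-\delta) \le f(0) \le f(\delta)$. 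Exponentiating produces the chain
\[
\frac{\Gamma(y-\delta)}{\Gamma(x-\delta)} \le \frac{\Gamma(y)}{\Gamma(x)} \le \frac{\Gamma(y+\delta)}{\Gamma(x+\delta)},
\]
which is in fact stronger than the asserted $\ll$ bounds: the implied constant is $1$, uniform in $x$, $y$, and $\delta$.

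There is essentially no obstacle here beyond bookkeeping; the only thing to verify is that the domain restriction $x + t > 0$ (needed so that both $\Gamma(x+t)$ is positive and $\psi(x+t)$ is finite) holds throughout $[-\delta, \delta]$, which is immediate from $\delta < x - 1$. If one wished to avoid the digamma function entirely, the same monotonicity could be recovered from the log-convexity of $\Gamma$ or from its Weierstrass/integral representation, but the digamma derivation above seems the cleanest.
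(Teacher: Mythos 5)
Your proof is correct, and it takes a genuinely different route from the paper. The paper disposes of the Gamma-ratio inequalities in one line by invoking the standard uniform asymptotic $\Gamma(x+\alpha)\asymp\Gamma(x)\,x^{\alpha}$ (valid when the arguments stay bounded away from $0$, which $x-\delta>1$ guarantees): applying it in numerator and denominator reduces the claim to the trivial observation that $(x/y)^{\delta}\le 1 \le (y/x)^{\delta}$, and the second assertion $x\pm\delta\asymp x$ is simply declared trivial. You instead exploit the convexity of $\log\Gamma$: the derivative of $t\mapsto \log\Gamma(y+t)-\log\Gamma(x+t)$ is $\psi(y+t)-\psi(x+t)>0$ because the digamma function is increasing on $(0,\infty)$, so the three ratios are literally monotone in the shift, giving the chain of inequalities with implied constant $1$ rather than an unspecified absolute constant. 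Your treatment of $x\pm\delta\asymp x$ via the case split showing $\delta<x/2$ is also the right bookkeeping (note that $x-\delta>1$ alone would not give $x-\delta\gg x$ for large $x$, so the observation $\delta<1\le x/2$ when $x\ge 2$ is genuinely needed). In short: the paper's argument is shorter because it cites a known asymptotic, while yours is more elementary and yields a sharper, fully explicit inequality; either suffices for the way the lemma is used later (term-by-term comparisons in Section~\ref{sec:htos}).
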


\begin{proof}
The first relation follows immediately from the fact that $\Gamma(x+\alpha)\asymp \Gamma(x) x^\alpha$ provided $x$ and $x+\alpha$ are on subset of the positive reals bounded away from $0$. The second relation is trivial.
\end{proof}

\begin{lem}\label{lem:inequality}
Let $n $ be a positive integer, $\{p_i\}_{i=1}^n$ be a set of real numbers, and $\{q_i\}_{i=1}^n$ be a set of positive numbers. Then we have that
\[
\frac{\left( \sum_{i=1}^n p_i\right)^2}{\sum_{i=1}^n q_i} \le \sum_{i=1}^n \frac{p_i^2}{q_i},
\]
with equality if and only if all the fractions $\{p_i/q_i\}_{i=1}^n$ have the same value.
\end{lem}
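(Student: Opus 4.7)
The plan is to recognize this inequality as the Engel form (sometimes called Titu's lemma or the Cauchy-Schwarz inequality in Engel form) and to deduce it directly from the classical Cauchy-Schwarz inequality applied to a suitable pair of vectors. Since the $q_i$ are positive, the quantities $\sqrt{q_i}$ and $p_i/\sqrt{q_i}$ are well-defined real numbers, which is all the setup we need.

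Specifically, I would write $p_i = (p_i/\sqrt{q_i}) \cdot \sqrt{q_i}$ and apply Cauchy-Schwarz to the vectors $(p_i/\sqrt{q_i})_{i=1}^n$ and $(\sqrt{q_i})_{i=1}^n$ to obtain
\[
\left(\sum_{i=1}^n p_i\right)^2 = \left(\sum_{i=1}^n \frac{p_i}{\sqrt{q_i}} \cdot \sqrt{q_i}\right)^2 \le \left(\sum_{i=1}^n \frac{p_i^2}{q_i}\right)\left(\sum_{i=1}^n q_i\right).
\]
Dividing both sides by $\sum_{i=1}^n q_i$ (which is positive) then yields exactly the claimed inequality.

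For the equality case, I would invoke the standard equality condition for Cauchy-Schwarz: equality holds if and only if the two vectors $(p_i/\sqrt{q_i})$ and $(\sqrt{q_i})$ are proportional, i.e., there is a constant $c$ with $p_i/\sqrt{q_i} = c \sqrt{q_i}$ for every $i$. Since $q_i>0$, this is equivalent to $p_i/q_i = c$ for every $i$, which is precisely the statement that all the ratios $p_i/q_i$ are equal.

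There is no real obstacle here; the only thing one must be a bit careful about is that the $p_i$ are allowed to be negative or zero, but the proof via Cauchy-Schwarz does not require any sign restriction on the $p_i$ since we only use $p_i^2 \ge 0$ and the positivity of the $q_i$. One could alternatively give a self-contained proof by induction on $n$, reducing the case $n$ to the case $n-1$ via the two-term identity $\frac{a^2}{x} + \frac{b^2}{y} \ge \frac{(a+b)^2}{x+y}$ (which itself follows by clearing denominators and recognizing $(ay-bx)^2 \ge 0$), but the Cauchy-Schwarz argument above is the cleanest route.
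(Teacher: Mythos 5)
Your proof is correct and is essentially identical to the paper's: both apply Cauchy--Schwarz to the vectors $(\sqrt{q_i})$ and $(p_i/\sqrt{q_i})$ and read off the equality condition as proportionality, i.e.\ constancy of the ratios $p_i/q_i$. No issues.
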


\begin{proof}
This follows immediately from the Cauchy--Schwarz inequality:
\[
\left(  \sum_{i=1}^n \sqrt{q_i} \cdot \frac{p_i}{\sqrt{q_i}} \right)^2 \le \left( \sum_{i=1}^n \sqrt{q_i}^2 \right) \left( \sum_{i=1}^n \left( \frac{p_i}{\sqrt{q_i}} \right)^2 \right)
\]
with equality if and only if there exists a constant $C$ such that $C \sqrt{q_i} = p_i/\sqrt{q_i}$.
\end{proof}

\begin{lem}\label{lem:sumtointegral}
For a fixed constant $C$, we have
\[
\sum_{-Z^{2/3}\le k \le Z^{2/3} } \exp\left( - \frac{C}{Z} k^2\right) = \sqrt{\frac{\pi Z}{C}}(1+o(1))
\]
as $Z$ tends to $\infty$.
\end{lem}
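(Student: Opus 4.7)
The plan is to recognize the sum as a Riemann-sum approximation to the Gaussian integral
\[
\int_{-\infty}^{\infty} e^{-Cx^2/Z}\,dx = \sqrt{\pi Z/C},
\]
whose effective support has width on the order of $\sqrt{Z/C}$. Since the cutoff $Z^{2/3}$ is far beyond this natural width (because $Z^{2/3}/\sqrt{Z} = Z^{1/6}\to \infty$), both the discretization error from replacing the integral with a sum and the truncation error from excluding $|k| > Z^{2/3}$ should be negligible compared to the leading term.

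First I would set $f(x) = \exp(-Cx^2/Z)$ and exploit the fact that $f$ is unimodal with $f(0) = 1$, increasing on $(-\infty, 0]$ and decreasing on $[0, \infty)$. The standard monotone-sum-versus-integral comparison, applied on each side of the origin (bracketing $\int_{k-1}^{k} f(x)\,dx$ between $f(k-1)$ and $f(k)$), yields
\[
\left| \sum_{-Z^{2/3} \le k \le Z^{2/3}} f(k) \;-\; \int_{-Z^{2/3}}^{Z^{2/3}} f(x)\,dx \right| \;\le\; 2 f(0) \;=\; 2.
\]

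Next I would extend the integral to $(-\infty,\infty)$ by controlling the tails. Substituting $u = x\sqrt{C/Z}$ rewrites the one-sided tail as $\sqrt{Z/C}\int_{\sqrt{C}\,Z^{1/6}}^{\infty} e^{-u^2}\,du$, and the elementary Gaussian tail estimate $\int_T^{\infty} e^{-u^2}\,du \le e^{-T^2}/(2T)$ shows this is bounded by a factor that decays super-polynomially in $Z$. Combined with the explicit evaluation of the full Gaussian integral, this gives
\[
\sum_{-Z^{2/3} \le k \le Z^{2/3}} f(k) \;=\; \sqrt{\pi Z/C} \;+\; O(1),
\]
which, since $\sqrt{\pi Z/C}\to\infty$, equals $\sqrt{\pi Z/C}\,(1+o(1))$ as required.

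There is no genuine obstacle here; the argument is a textbook Riemann-sum comparison, and the only point requiring verification is that the truncation at $Z^{2/3}$ lies well past the effective support of the Gaussian. This is precisely why an exponent strictly greater than $1/2$ is chosen in the statement of the lemma.
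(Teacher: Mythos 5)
Your proposal is correct and follows essentially the same route as the paper: compare the sum to the truncated Gaussian integral, then show the truncation beyond $|x|\le Z^{2/3}$ and the discretization error are both negligible against the main term $\sqrt{\pi Z/C}$. The only difference is cosmetic --- you bound the sum-versus-integral discrepancy by $O(1)$ using unimodality and monotone bracketing, whereas the paper invokes Euler--Maclaurin with an error term $O\bigl(\int \frac{C|x|}{Z}e^{-Cx^2/Z}\,dx\bigr)$, which is likewise $O(1)$; both yield $\sqrt{\pi Z/C}+O(1)=\sqrt{\pi Z/C}\,(1+o(1))$.
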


\begin{proof}
We apply Euler-Macluarin summation:
\begin{align*}
\sum_{-Z^{2/3}\le k \le Z^{2/3} } \exp\left( - \frac{C}{Z} k^2\right) &= \int_{-Z^{2/3}}^{Z^{2/3}}\exp\left( - \frac{C}{Z} x^2\right)\ dx\\
&\qquad + O\left(  \int_{-Z^{2/3}}^{Z^{2/3}} \frac{C|x|}{Z}\exp\left( - \frac{C}{Z} x^2\right)\ dx  \right)\\
&\qquad +O\left(\exp\left( - \frac{C}{Z} Z^{4/3}\right) \right)\\
&= \sqrt{\frac{\pi Z}{C}} - 2 \sqrt{\frac{Z}{C}} \cdot \int_{Z^{1/6}C^{1/2}}^{\infty} \exp\left( - x^2\right)\ dx\\
&\qquad + O\left(  \int_{0}^{Z^{1/6}C^{1/2}} |x|\exp\left( - x^2\right)\ dx  \right) +O\left(1 \right)\\
&=  \sqrt{\frac{\pi Z}{C}} (1+o(1))
\end{align*}
\end{proof}

%%%%%%%%%%%
\section{Proving Theorem \ref{thm:main} from Theorem \ref{thm:sbound}}\label{sec:counting}
%%%%%%%%%%%%%

By Theorem \ref{thm:PS}, it suffices to show that for any string $s=[a_1,a_2,\dots,a_k]$, we have
\[
\frac{\#\{ 0 \le n \le N-k\mid d_{n+i} = a_i, \quad 1 \le i \le k\}}{N}  \ll \lambda_s
\]
with implicit constant uniform over all strings.

The counting function 
\[
\#\{ 0 \le n \le N-k\mid d_{n+i} = a_i, \quad 1 \le i \le k\}
\]  is very difficult to compute directly, so we will instead estimate its size in terms of other, simpler functions. The $N$th digit of $x$, $d_N$, must appear in the concatenation of some string $s_n$, for which we have $\mu(C[s_n])= \epsilon = \epsilon(N)$.

Let $A(\epsilon;s)$ denote the number of time the string $s$ occurs \emph{within} the strings $s_i$ where $\lambda_{s_i} \ge \epsilon$.  Let $A(\epsilon)$ just denote the total number of digits in all the strings $s_i$ where $\lambda_{s_i}\ge \epsilon$. We will also use $A^\#(\epsilon)$ to denote the total number of strings $s_i$ where $\lambda_{s_i} \ge \epsilon$.

With $\epsilon=\epsilon(N)$, we clearly have 
\[
\#\{ 0 \le n \le N-k\mid d_{n+i} = a_i, \quad 1 \le i \le k\} \le A(\epsilon;s) + k A^\#(\epsilon)
\] 
where the latter term comes from a trivial estimate on how many times the string $s$ could occur starting in one string $s_i$ and ending another string $s_j$. Moreover, the number $N$ itself is at least $A(2\epsilon)$, and thus
\[
\frac{\#\{ 0 \le n \le N-k\mid d_{n+i} = a_i, \quad 1 \le i \le k\}}{N}  \le \frac{A(\epsilon;s) + k A^\#(\epsilon)}{A(2\epsilon)}.
\]

Now we wish to bound the $A$ functions, in terms of the $S$ functions \eqref{eq:S1def} and \eqref{eq:S2def}. Following the assumption from Theorem \ref{thm:main}, let us assume that for a string $s=[a_1,a_2, \dots, a_k]$ we have
\[
 c_1 \lambda_{a_1} \lambda_{a_2} \dots \lambda_{a_k} \le \lambda_s \le c_2 \lambda_{a_1} \lambda_{a_2} \dots \lambda_{a_k}.
\]

Suppose we want to count the total number of ways one can concatenate the string $s$ together with $m_d$ copies of the digit $d$. If counted with multiplicity, this will correctly count the total number of times $s$ occurs in strings that have $m_d+e_d$ copies of the digit $d$, where $e_d$ is the number of times $d$ occurs in $s$. There are precisely 
\[
(1+m_1+m_2+\dots+m_D)\cdot\frac{(m_1+m_2+\dots+m_D)!}{m_1!m_2!\dots m_D!}
\]
such strings (counted with multiplicity), each of which will have a cylinder set of measure in the interval
\[
\left[\frac{c_1}{c_2} \lambda_s \cdot \prod_{d\le D} \lambda_d^{m_d}, \frac{c_2}{c_1} \lambda_s \cdot \prod_{d\le D} \lambda_d^{m_d}\right]
\]
Thus if we let
\begin{align*}
S(\epsilon;s)&=\sum_{\substack{m_1, m_2, \dots m_D\\ \lambda_1^{m_1}\lambda_2^{m_2}\dots \lambda_D^{m_D}\ge \epsilon/\lambda_s  }} (1+m_1+m_2+\dots+m_D) \frac{(m_1+m_2+\dots+m_D)!}{m_1!m_2!\dots m_D!}\\
&= S(\epsilon/\lambda_s)+S^\#(\epsilon/\lambda_s),
\end{align*}
then we clearly have  
\[
 S\left( \frac{c_2}{c_1}\epsilon;s\right) \le A(\epsilon;s) \le S\left( \frac{c_1}{c_2}\epsilon;s\right).
\]

By a similar argument we can show
\[
 S\left( \frac{c_2}{c_1}\epsilon\right) \le A(\epsilon) \le S\left( \frac{c_1}{c_2}\epsilon\right) \qquad \text{and} \qquad S^\#\left( \frac{c_2}{c_1}\epsilon\right) \le A^\#(\epsilon) \le S^\#\left( \frac{c_1}{c_2}\epsilon\right)
\]

Thus,
\[
\frac{\#\{ 0 \le n \le N-k\mid d_{n+i} = a_i, \quad 1 \le i \le k\}}{N}  \le \frac{S\left(\frac{c_1}{c_2\lambda_s}\epsilon\right) + (k+1) S^\#\left(\frac{c_1}{c_2\lambda_s}\epsilon\right)}{S\left(2\frac{c_2}{c_1}\epsilon\right)}.
\]
Now applying Theorem \ref{thm:sbound} we obtain
\begin{align*}
\frac{\#\{ 0 \le n \le N-k\mid d_{n+i} = a_i, \quad 1 \le i \le k\}}{N} & \ll \frac{\left(\frac{c_1}{c_2\lambda_s}\epsilon\right)^{-1} \left|\log\left(\frac{c_1}{c_2\lambda_s}\epsilon\right)\right| + (k+1)\left(\frac{c_1}{c_2\lambda_s}\epsilon\right)^{-1}}{\left(2\frac{c_2}{c_1}\epsilon\right)^{-1} \left|\log\left(2\frac{c_2}{c_1}\epsilon\right)\right| }\\
&\ll \lambda_s
\end{align*}
and these bounds are uniform in $s$, which completes the proof of Theorem \ref{thm:main}

%%%%%%%%%%%%%%%%%
\section{Proof of Theorem \ref{thm:sbound}}\label{sec:Hreduction}
%%%%%%%%%%%%%%%%%%

We will consider two new functions $H(\epsilon)$ and $H^\#(\epsilon)$ given by the following.

Let $\mathcal{H}_\epsilon$ denote the hyperplane segment
\[
\mathcal{H}_\epsilon :=\left\{\x=(x_1, x_2, \dots, x_D) \middle| \sum_{i=1}^D x_i \log \lambda_i = \log \epsilon, \quad x_i\ge 0, \forall i\right\}.
\]
Note that
\[
x_1 \log \lambda_1^{-1} + \dots + x_D \log \lambda_D^{-1} \le \log \epsilon^{-1}.
\]
is equivalent $\lambda_1^{x_1}\dots \lambda_D^{x_D} \ge \epsilon$. We will consider ``lattice'' points $\m\in \mathcal{H}_\epsilon$ to be given by $(m_1,m_2,\dots, m_D)$ where $m_2, \dots, m_D\in \mathbb{Z}$, and $m_1=M$ is a real number determined by the other coordinates via the formula 
\[
M= \frac{\log \left(\epsilon /\left( \lambda_2^{m_2}\lambda_3^{m_3}\dots\lambda_D^{m_D}\right) \right)}{\log \lambda_1}.
\]
We then define $H(\epsilon)$ and $H^\#(\epsilon)$ by
\begin{align*}
H(\epsilon) &:=\sum_{\m\in \mathcal{H}_\epsilon} (M+m_2+m_3+\dots+m_D) \frac{(M+m_2+m_3+\dots+m_D)!}{M!m_2!m_3!\dots m_D!}\\
H^\#(\epsilon) &:=\sum_{\m\in \mathcal{H}_\epsilon}  \frac{(M+m_2+m_3+\dots+m_D)!}{M!m_2!m_3!\dots m_D!}
\end{align*}
We extend the factorial to real values in the natural way by $x! = \Gamma(x+1)$.  

While the functions $S(\epsilon)$ and $S^\#(\epsilon)$ look at all values lying \emph{above} the hyperplane $\mathcal{H}_\epsilon$, the functions $H(\epsilon)$ and $H^\#(\epsilon)$ instead look at values \emph{on} the hyperplane $\mathcal{H}_\epsilon$.

Theorem \ref{thm:sbound} (and therefore Theorem \ref{thm:main}) will follow from the following two lemmas, which we prove in subsequent sections.

\begin{lem}\label{lem:htos}
We have
\[
H(\epsilon/\lambda_1) \ll S(\epsilon) \ll H(\epsilon\cdot \lambda_2) \qquad \text{ and } \qquad 
H^\#(\epsilon/\lambda_1) \ll S^\#(\epsilon) \ll H^\#(\epsilon\cdot \lambda_2).
\]
\end{lem}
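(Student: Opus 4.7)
The plan is to prove the upper and lower bounds of Lemma~\ref{lem:htos} separately: the lower bounds reduce to a simple termwise domination, while the upper bounds rest on a hockey stick identity combined with a ``shift'' map $(m_2,\dots,m_D) \mapsto (m_2+1, m_3, \dots, m_D)$ that injects admissible tuples on $\mathcal{H}_\epsilon$ into admissible tuples on $\mathcal{H}_{\epsilon \lambda_2}$.

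For the lower bounds $H(\epsilon/\lambda_1) \ll S(\epsilon)$ and $H^\#(\epsilon/\lambda_1) \ll S^\#(\epsilon)$, I would first observe that for each integer tuple $(m_2, \dots, m_D)$ admissible on $\mathcal{H}_{\epsilon/\lambda_1}$, the corresponding continuous coordinate is $M_\epsilon - 1$, where $M_\epsilon$ denotes the $M$-value associated with the same tuple on $\mathcal{H}_\epsilon$. Since $M_\epsilon - 1 \leq M_* := \lfloor M_\epsilon \rfloor$ and the (Gamma-extended) multinomial $\binom{x + m_2 + \cdots + m_D}{x, m_2, \dots, m_D}$ is non-decreasing in $x \geq 0$, the $H(\epsilon/\lambda_1)$-summand (resp.\ $H^\#(\epsilon/\lambda_1)$-summand) at this tuple is dominated by the $m_1 = M_*$ contribution to the inner sum of $S(\epsilon)$ (resp.\ $S^\#(\epsilon)$) at the same tuple. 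Summing then gives the lower bounds with implicit constant $1$.

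For the upper bounds $S(\epsilon) \ll H(\epsilon \lambda_2)$ and $S^\#(\epsilon) \ll H^\#(\epsilon \lambda_2)$, the crucial ingredient is the identity
\[
\sum_{m_1 = 0}^{M_*} \binom{m_1 + n_2}{m_1, m_2, \dots, m_D} = \frac{m_2 + 1}{n_2 + 1} \binom{M_* + n_2 + 1}{M_*, \, m_2 + 1, \, m_3, \dots, m_D},
\]
where $n_2 := m_2 + \cdots + m_D$; this is a direct consequence of the hockey stick identity $\sum_{m_1=0}^{M_*} \binom{m_1 + n_2}{n_2} = \binom{M_* + n_2 + 1}{n_2 + 1}$ followed by bookkeeping of factorials. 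A short verification shows that the shift $(m_2, \ldots, m_D) \mapsto (m_2 + 1, m_3, \ldots, m_D)$ carries admissibility on $\mathcal{H}_\epsilon$ to admissibility on $\mathcal{H}_{\epsilon \lambda_2}$ and preserves the continuous coordinate, in the sense that $M_{\epsilon\lambda_2}(m_2+1, m_3, \ldots, m_D) = M_\epsilon(m_2, \ldots, m_D)$. Combining the trivial $(m_2+1)/(n_2+1) \leq 1$ with the monotonicity of the multinomial in its continuous argument (to replace $M_*$ by $M_\epsilon$) shows that the right-hand side of the identity is at most the $H^\#(\epsilon\lambda_2)$-summand at the shifted tuple; pulling out the additional factor $m_1+n_2 \leq M_\epsilon + n_2 + 1$ before applying the identity handles $S(\epsilon) \ll H(\epsilon\lambda_2)$. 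Injectivity of the shift converts these termwise inequalities into the global bounds.

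The main obstacle is spotting the correct form of the identity. The naive estimate $\sum_{m_1=0}^{M_*} \binom{m_1+n_2}{m_1, m_2, \ldots, m_D} \leq (M_*+1)\binom{M_*+n_2}{M_*, m_2, \ldots, m_D}$ produces an extra factor $M_*+1$ that cannot be absorbed into any single $H$-term in the unbalanced regime $M_* \gg n_2$. The identity above avoids this by packaging the missing mass into a multinomial with one extra unit of $m_2$, after which the leftover multiplier $(m_2+1)/(n_2+1) \leq 1$ is actively useful rather than a liability. Once this identity is in hand, the remaining steps are elementary inequalities requiring no input from Lemma~\ref{lem:gammagrowth}.
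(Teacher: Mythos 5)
Your proposal is correct and follows essentially the same route as the paper: bound $S(\epsilon)$ below by keeping only the top admissible $m_1$ in each inner sum and compare with $H(\epsilon/\lambda_1)$, and bound it above via the hockey-stick identity, which repackages the inner sum as a multinomial with $m_2$ replaced by $m_2+1$ times the harmless factor $(m_2+1)/(n_2+1)\le 1$, matching the terms of $H(\epsilon\lambda_2)$ with $m_2\ge 1$. The only (minor) difference is that you replace the paper's appeal to Lemma \ref{lem:gammagrowth} for the integer-versus-real comparison of the first coordinate by monotonicity of the Gamma-extended multinomial in its continuous argument, which even yields implicit constant $1$.
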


\begin{lem}\label{lem:hbound}
We have
\[
H(\epsilon) \asymp \frac{| \log \epsilon|}{\epsilon} \qquad H^\#(\epsilon) \asymp \frac{1}{\epsilon}
\]
as $\epsilon$ tends to $0$.
\end{lem}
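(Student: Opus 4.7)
The plan is to evaluate both sums via Laplace's method in its lattice form. The key initial observation is that on $\mathcal{H}_\epsilon$ the product $\prod_i \lambda_i^{m_i}$ equals $\epsilon$ exactly, so that we may rewrite
\[
\epsilon H^\#(\epsilon) = \sum_{\m\in\mathcal{H}_\epsilon} \binom{N}{\m} \prod_i \lambda_i^{m_i}, \qquad \epsilon H(\epsilon) = \sum_{\m\in\mathcal{H}_\epsilon} N \binom{N}{\m} \prod_i \lambda_i^{m_i},
\]
where $N = M + m_2 + \cdots + m_D$ and $\binom{N}{\m}$ denotes the multinomial coefficient. It therefore suffices to show that the first sum is bounded above and below by constants and that the second is $\asymp |\log\epsilon|$, since each summand now has the form of a multinomial probability.

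Next I would locate the peak using Stirling's formula: each summand is approximately $N^{-(D-1)/2}$ times $\exp\bigl(-N\,\mathrm{KL}(\p\,\|\,\lambda)\bigr)$, where $p_i = m_i/N$ and $\mathrm{KL}(\p\,\|\,\lambda) = \sum_i p_i\log(p_i/\lambda_i) \ge 0$. Non-negativity of KL forces the exponential factor to be maximal precisely when $p_i = \lambda_i$ for all $i$; the hyperplane condition then pins down $N = N_0 := |\log\epsilon|/E$, where $E := -\sum_i \lambda_i\log\lambda_i$. Writing $\delta_i = m_i - N_0\lambda_i$, the hyperplane constraint $\sum_i m_i\log\lambda_i = \log\epsilon$ translates into the linear condition $\sum_i \delta_i\log\lambda_i = 0$, and a second-order Taylor expansion produces
\[
N\,\mathrm{KL}(\p\,\|\,\lambda) = \frac{1}{2N_0}\left(\sum_i \frac{\delta_i^2}{\lambda_i} - \left(\sum_i \delta_i\right)^2\right) + \text{(cubic and higher)},
\]
the first-order term vanishing precisely because of the hyperplane constraint.

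The resulting quadratic form is strictly positive definite on the constraint hyperplane, which is where Lemma \ref{lem:inequality} enters: applied with $p_i = \delta_i$ and $q_i = \lambda_i$ (using $\sum_i \lambda_i = 1$) it gives $(\sum_i \delta_i)^2 \le \sum_i \delta_i^2/\lambda_i$, with equality only when $\delta_i = c\lambda_i$ for a constant $c$; but the linear constraint then reduces to $-cE = 0$, forcing $c=0$. Parameterising $\mathcal{H}_\epsilon$ by $(m_2,\dots,m_D) \in \mathbb{Z}_{\ge 0}^{D-1}$ with $M\in\mathbb{R}$ determined, each of the $D-1$ lattice directions exhibits Gaussian decay of effective width $\sqrt{N_0}$; applying Lemma \ref{lem:sumtointegral} in each direction converts the multidimensional sum into a Gaussian integral and produces a factor of order $N_0^{(D-1)/2}$ that cancels the $N_0^{-(D-1)/2}$ coming from the peak multinomial value. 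This leaves an absolute constant, giving $H^\#(\epsilon) \asymp 1/\epsilon$. Inside the Gaussian bulk, $N = N_0 + O(\sqrt{N_0})$, so the extra factor of $N$ in $H(\epsilon)$ contributes an additional $N_0 \asymp |\log\epsilon|$, yielding $H(\epsilon) \asymp |\log\epsilon|/\epsilon$.

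The main technical obstacle is making this Laplace-method sketch fully rigorous while $M$ is a real variable rather than an integer. Lemma \ref{lem:gammagrowth} supplies the needed control over the Stirling-type estimates for $\Gamma(M+1)$ under the fractional shifts that arise as one moves from lattice point to lattice point along $\mathcal{H}_\epsilon$. One must also establish uniform exponential tail bounds (outside a neighbourhood of radius $N_0^{2/3}$, say) from the coercivity of the quadratic form, and handle boundary contributions where some $m_i$ is close to zero. Both halves of the $\asymp$ conclusion must be argued separately: the lower bound follows easily by restricting the sum to a Gaussian-sized neighbourhood of the peak, while the upper bound requires the more careful tail analysis just described.
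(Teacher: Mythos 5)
Your proposal is correct and follows essentially the same Laplace-method route as the paper: the same peak $m_i\approx\lambda_i|\log\epsilon|/E$, Lemma \ref{lem:inequality} to get strict definiteness of the quadratic form on the constraint hyperplane, tails cut at radius $\asymp|\log\epsilon|^{2/3}$, and Lemma \ref{lem:sumtointegral} for the Gaussian sums, with the paper's $F$ merely repackaged as $-N\,\mathrm{KL}(\p\,\|\,\lambda)$ (on $\mathcal{H}_\epsilon$ the two differ exactly by $\log\epsilon$). The one step you state too quickly is the factorization over the $D-1$ coordinate directions, since the reduced quadratic form in $(\delta_2,\dots,\delta_D)$ has cross terms; the paper resolves this by sandwiching the exponent between $F_\pm$ built from the smallest and largest eigenvalues of the Hessian, and your argument needs the same (routine) device before Lemma \ref{lem:sumtointegral} can be applied coordinatewise.
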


%%%%%%%%%%%%%%
\section{Proof of Lemma \ref{lem:htos}}\label{sec:htos}
%%%%%%%%%%%%%%%

We shall provide bounds for $S(\epsilon)$. The method for $S^\#(\epsilon)$ is similar.

First, we place a lower bound on $S(\epsilon)$.  We have
\begin{align*}
S(\epsilon)&= \sum_{\substack{ m_2, \dots, m_D \\ \lambda_2^{m_2}\dots \lambda_D^{m_D}\ge \epsilon }}\left(\sum_{\substack{m_1\\ \lambda_1^{m_1}\lambda_2^{m_2}\dots \lambda_D^{m_D}\ge \epsilon }} (m_1+m_2+\dots+m_D) \frac{(m_1+m_2+\dots+m_D)!}{m_1!m_2!\dots m_D!}\right)\\
&\gg \sum_{\substack{ m_2, \dots, m_D \\ \lambda_2^{m_2}\dots \lambda_D^{m_D}\ge \epsilon}} (M'+m_2+\dots+m_D) \frac{(M'+m_2+m_3+\dots+m_D)!}{M'!m_2!m_3!\dots m_D!},
\end{align*}
where in each summand $M'$ is the largest integer such that
\begin{equation}\label{eq:M'def}
\lambda_1^{M'} \lambda_2^{m_2}\lambda_2^{m_3} \dots \lambda_D^{m_D}\ge \epsilon.
\end{equation}
Increasing the size of $\epsilon$ in the index of summation but not in the definition of $M'$ will only result in removing terms, therefore,
\begin{align*}
S(\epsilon)\gg \sum_{\substack{ m_2, \dots, m_D \\ \lambda_2^{m_2}\dots \lambda_D^{m_D}\ge \epsilon/\lambda_1}} (M'+m_2+\dots+m_D) \frac{(M'+m_2+m_3+\dots+m_D)!}{M'!m_2!m_3!\dots m_D!}.
\end{align*}
Comparing this series term by term with $H(\epsilon/\lambda_1)$ and noting that $M'$ for this sum is greater than and within $1$ of the corresponding $M$ in the terms of $H(\epsilon/\lambda_1)$, we get that $S(\epsilon)\gg H(\epsilon/\lambda_1)$ by Lemma \ref{lem:gammagrowth}.

For the reverse inequality, we have, for fixed $m_2, m_3, \dots, m_D$ and with $M'$ defined as in \eqref{eq:M'def}, that
\begin{align*}
&\sum_{\substack{m_1 \\ \lambda_1^{m_1} \lambda_2^{m_2} \dots \lambda_D^{m_D} \ge \epsilon}} (m_1+m_2+\dots+m_D) \frac{(m_1+m_2+\dots+m_D)!}{m_1!m_2!\dots m_D! }\\
&\qquad \le (M'+m_2+m_3+\dots+ m_D) \cdot \sum_{\substack{m_1 \\ \lambda_1^{m_1} \lambda_2^{m_2} \dots \lambda_D^{m_D}\ge \epsilon}} \frac{(m_1+m_2+\dots+m_D)!}{m_1!m_2!\dots  m_D!}\\
&\qquad = (M'+m_2+m_3+\dots+m_D) \cdot \frac{(m_2+m_3+\dots+m_D)!}{m_2!m_3!\dots m_D!} \times\\
&\qquad\qquad \times \sum_{\substack{m_1 \\ \lambda_1^{m_1} \lambda_2^{m_2} \dots \lambda_D^{m_D} \ge \epsilon}}  \binom{m_1+m_2+m_3+\dots+m_D}{m_2+m_3+\dots+m_D}\\
&\qquad = (M'+m_2+m_3+\dots+m_D) \cdot \frac{(m_2+m_3+\dots+m_D)!}{m_2!m_3!\dots m_D!} \times\\
&\qquad\qquad \times  \binom{M'+1+m_2+m_3+\dots+m_D}{1+m_2+m_3+\dots+m_D}\\
&\qquad  = \frac{m_2+1}{1+m_2+m_3+\dots+m_D} (M'+m_2+m_3+\dots+m_D)\times\\
&\qquad\qquad \times \binom{M'+(m_2+1)+m_3+m_4+\dots+m_D}{M, m_2+1,m_3,m_4,\dots,m_D}\\
&\qquad \ll (M'+(m_2+1)+m_3+\dots+m_D) \cdot \frac{(M'+(m_2+1)+m_3+m_4+\dots+m_D)!}{M! (m_2+1)!m_3!m_4!\dots m_D!}.
\end{align*}
By summing over all possible $m_2, m_3, \dots, m_D$ for which the sum is non-empty, we obtain most of the terms from $H(\epsilon\cdot \lambda_2)$, namely all the terms where $m_2 \ge 1$.  So therefore we have $S(\epsilon)\ll H(\epsilon\cdot \lambda_2)$.

%%%%%%%%%%%%%%%%%%
\section{Proof of Lemma \ref{lem:hbound}}\label{sec:Hbound}
%%%%%%%%%%%%%%%%%%%%

We shall provide the proof for $H(\epsilon)$ as the proof for $H^\#(\epsilon)$ is similar.

We want to begin by examining the terms of $H(\epsilon)$, using Stirling's formula.  We will use a somewhat non-standard form as follows:
\begin{equation}\label{eq:stirling}
x! \asymp \sqrt{2\pi (x+1)} \left(  \frac{x}{e}\right)^x.
\end{equation}
This clearly follows from the usual Stirling's formula for large $x$, since replacing $x$ by $x+1$ inside the square root introduces an error of at most $1+O(x^{-1})$; however this function has the added advantage of being true and uniform for all non-negative $x$, because the function on the right is bounded away from $0$.

Now consider a given term of $H(\epsilon)$,
\begin{align}\label{eq:exampleterm}
(M+m_2+m_3+\dots+m_D) \frac{(M+m_2+m_3+\dots+m_D)!}{M!m_2!m_3!\dots m_D!},
\end{align}
where, as before,
\[
M= \frac{\log \left(\epsilon /\left( \lambda_2^{m_2}\lambda_3^{m_3}\dots\lambda_D^{m_D}\right) \right)}{\log \lambda_1}.
\]
Applying Stirling's formula \eqref{eq:stirling} gives that \eqref{eq:exampleterm}  is on the order of $ G(\m) \cdot \exp\left( F(\m)   \right)$,
where 
\[
G(\m) := \frac{(M+m_2+\dots+m_D+1)^{3/2}}{\sqrt{(M+1)(m_2+1)(m_3+1)\dots (m_D+1)}}
\]
and
\begin{align*}
F(\m) &:= (M+m_2+m_3+\dots+m_D) \log(M+m_2+m_3+\dots+m_D)\\
&\qquad - M\log M- \sum_{i=2}^D m_i \log m_i.
\end{align*}
The function $G$ is fairly smooth and, compared to the exponential of $F$, quite small. Therefore we shall focus our studies primarily on understanding the properties of $F$.

%%%%%%%%%%%%%
\subsection{Understanding $F$}
%%%%%%%%%%%%%

In order to understand the properties of $F$ better, it is helpful to work with an auxiliary function.
Let 
\[
\tilde{F}(\x):=(x_1+\dots+x_D)\log (x_1+\dots+x_D) -\sum_{i=1}^D x_i \log x_i
\] 
be a function on $\mathcal{H}_\epsilon$.

We think of $F$ as being a function of $D-1$ variables. (The value of $m_1=M$ is determined by the others.) However, we will think of $\tilde{F}$ as a function on $D$ free variables, and then restrict our attention to the $D-1$-dimensional hyperplane $\mathcal{H}_\epsilon$.

\begin{prop}\label{prop:F2nd}
Let $l=(a_1 t+b_1, a_2 t + b_2, \dots, a_D t + b_D)$ be a line parallel to and intersecting the hyperplane segment $\mathcal{H}_\epsilon$. Then the second directional derivative of $\tilde{F}$ along this line is negative.
\end{prop}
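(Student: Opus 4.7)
The plan is to substitute the parametrization of the line into $\tilde{F}$, reduce the claim to a one-variable second-derivative computation, and then invoke the Cauchy--Schwarz inequality of Lemma \ref{lem:inequality}; the strictness of that inequality will come from the hypothesis that the line actually meets $\mathcal{H}_\epsilon$ rather than merely being parallel to it.

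First I would write $A = \sum_{i=1}^D a_i$, $B = \sum_{i=1}^D b_i$, and $\sigma(t) = At + B$ for the sum of the coordinates of $l(t)$. Then
\[
\tilde{F}(l(t)) = \sigma(t) \log \sigma(t) - \sum_{i=1}^D (a_i t + b_i) \log(a_i t + b_i),
\]
and a short computation in which the linear terms cancel yields
\[
\frac{d}{dt}\tilde{F}(l(t)) = A \log \sigma(t) - \sum_{i=1}^D a_i \log(a_i t + b_i), \quad \frac{d^2}{dt^2}\tilde{F}(l(t)) = \frac{A^2}{\sigma(t)} - \sum_{i=1}^D \frac{a_i^2}{a_i t + b_i}.
\]

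Next I would apply Lemma \ref{lem:inequality} with $p_i = a_i$ and $q_i = a_i t + b_i$, the latter being positive because $l(t)$ lies in the positive orthant on $\mathcal{H}_\epsilon$. This gives
\[
\frac{A^2}{\sigma(t)} = \frac{\left(\sum_i a_i\right)^2}{\sum_i (a_i t + b_i)} \le \sum_{i=1}^D \frac{a_i^2}{a_i t + b_i},
\]
so the second directional derivative is at most $0$.

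The only delicate point---and thus the main obstacle---will be upgrading this to a strict inequality. By the equality clause of Lemma \ref{lem:inequality}, equality would force the ratios $a_i/(a_i t + b_i)$ all to coincide, which in turn forces the direction vector $(a_i)$ and the base-point vector $(b_i)$ to be proportional; hence the line $l$ would have to pass through the origin. But any line parallel to $\mathcal{H}_\epsilon$ has direction satisfying $\sum_i a_i \log \lambda_i = 0$, so every point on such an origin-passing line would satisfy $\sum_i x_i \log \lambda_i = 0$. Since $\epsilon < 1$ makes $\log \epsilon \ne 0$, no such line can meet $\mathcal{H}_\epsilon$, contradicting the hypothesis. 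Consequently Cauchy--Schwarz is strict and $\frac{d^2}{dt^2}\tilde{F}(l(t)) < 0$, as desired.
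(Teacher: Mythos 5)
Your proposal is correct, and its skeleton is the same as the paper's: restrict $\tilde{F}$ to the line, compute $\frac{d^2}{dt^2}\tilde{F}(l(t)) = \frac{(\sum_i a_i)^2}{\sum_i (a_i t+b_i)} - \sum_i \frac{a_i^2}{a_i t+b_i}$, and apply Lemma \ref{lem:inequality} with $p_i=a_i$, $q_i=a_it+b_i>0$. Where you genuinely diverge is in ruling out the equality case. The paper uses the parallelism condition $\sum_i a_i\log\lambda_i=0$ together with the negativity of every $\log\lambda_i$ to conclude that the $a_i$ take both signs, so the ratios $a_i/(a_it+b_i)$ (with positive denominators) cannot all coincide. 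You instead observe that a common ratio $c$ forces $a_i(1-ct)=cb_i$, hence (since a line has nonzero direction, so $c\neq 0$) $b$ is a scalar multiple of $a$ and $l$ passes through the origin; combined with $\sum_i a_i\log\lambda_i=0$ this places the entire line in the hyperplane $\sum_i x_i\log\lambda_i=0$, which is disjoint from $\mathcal{H}_\epsilon$ because $\log\epsilon\neq 0$ --- contradicting the hypothesis that $l$ meets $\mathcal{H}_\epsilon$. Both arguments are valid and use the same input ($\sum_i a_i\log\lambda_i=0$) in different ways: the paper's sign argument is shorter and does not need $\epsilon\neq 1$, while yours is more geometric and makes explicit why the intersection hypothesis (not just parallelism) matters. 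One small point to make explicit: the case in which the common ratio is $c=0$ forces all $a_i=0$, which you should dismiss by noting that a line has a nonzero direction vector (the paper's proof makes the same implicit assumption), and, as in the paper, the strict positivity of the $a_it+b_i$ should be phrased as restricting to points of the line in the domain of $\tilde{F}$ rather than to $\mathcal{H}_\epsilon$ itself, whose boundary points have some coordinate equal to $0$.
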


\begin{proof}
Since $l$ is parallel to and intersecting $\mathcal{H}_\epsilon$, we have that
\[
\sum_{i=1}^D (a_i t + b_i) \log \lambda_i = \log \epsilon.
\]
By isolating the coeffecient of $t$, we obtain
\[
\sum_{i=1}^D a_i \log \lambda_i = 0.
\]
In particular, since all the $\log \lambda_i$ are negative, there must exist at least one positive and one negative $a_i$.

The second derivative of $F$ along this line is given by
\[
\frac{d^2}{dt^2} F(a_1 t+b_1, \dots, a_D t + b_D) = \frac{\left( \sum_{i=1}^D a_i \right)^2}{\sum_{i=1}^D \left( a_i t + b_i \right)} - \sum_{i=1}^D \frac{a_i^2}{a_i t + b_i}.
\]
By Lemma \ref{lem:inequality}, this is never positive, and is zero if and only if $a_i/(a_i t + b_i)$ has the same value for all $i$; however, in order to be in the domain of $F$, all the $a_i t + b_i$ must be positive, and as we noted earlier, at least one $m_i$ must be positive and at least one $m_i$ must be negative, therefore the $a_i / (a_i t + b_i)$ cannot all have the same value. The second derivative is therefore strictly negative.
\end{proof}

This proposition produces two immediate consequences. First, $\tilde{F}$ must have a unique local maximum on $\mathcal{H}_\epsilon$: it must have a maximum on $\mathcal{H}_\epsilon$ since it is a continuous function on a compact set, and there cannot be two local maximums since on the line between them $\tilde{F}$ would have strictly negative second derivative. Second, on any line passing through this maximum, the function $\tilde{F}$ is strictly decreasing away from the maximum.

\begin{lem}\label{lem:Fmax}
The function $\tilde{F}(\x)$ has its unique maximum on $\mathcal{H}_\epsilon$ at the point $\p=(\lambda_1 L, \lambda_2 L, \\ \lambda_3 L, \dots, \lambda_D L)$, where
\[
L=\frac{\log(\epsilon)}{\lambda_1 \log (\lambda_1) + \lambda_2 \log (\lambda_2) + \dots + \lambda_D \log (\lambda_D)}.
\]
Moreover, $\tilde{F}(\p) = - \log \epsilon$.
\end{lem}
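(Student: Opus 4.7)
The plan is to locate the maximum by Lagrange multipliers and then verify the claimed value directly; uniqueness is already free from Proposition \ref{prop:F2nd}.

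First I would compute the unconstrained partial derivatives
\[
\frac{\partial \tilde{F}}{\partial x_j} = \log(x_1+x_2+\dots+x_D) - \log x_j,
\]
and then set up the Lagrange condition against the constraint function $g(\x)=\sum_i x_i\log\lambda_i - \log\epsilon$. This gives the system
\[
\log\frac{x_1+x_2+\dots+x_D}{x_j} = \mu \log\lambda_j, \qquad 1\le j \le D,
\]
for some multiplier $\mu$, equivalently $x_j/S = \lambda_j^{-\mu}$, where $S=\sum_i x_i$. Summing over $j$ and using the fact that $\{\lambda_d\}_{d\in\mathcal{D}}$ are the measures of a partition of $\Omega$ (so $\sum_j\lambda_j=1$), the only value of $\mu$ consistent with $\sum_j x_j/S=1$ is $\mu=-1$. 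Hence $x_j = \lambda_j S$ for every $j$.

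Next I would feed this back into the constraint $\sum_j x_j\log\lambda_j=\log\epsilon$, which gives $S\sum_j\lambda_j\log\lambda_j=\log\epsilon$, so $S=L$ with $L$ as in the statement. This identifies the unique critical point as $\p=(\lambda_1 L,\dots,\lambda_D L)$. Since Proposition \ref{prop:F2nd} shows that $\tilde{F}$ is strictly concave along every line in $\mathcal{H}_\epsilon$ and $\mathcal{H}_\epsilon$ is compact (intersected with the nonnegative orthant), this critical point must be the unique global maximum; the argument already sketched in the paragraph following Proposition \ref{prop:F2nd} takes care of this step with nothing more to do.

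Finally, to evaluate $\tilde{F}(\p)$ I would substitute $x_i=\lambda_i L$ and use $\sum_i x_i = L\sum_i\lambda_i = L$:
\[
\tilde{F}(\p)=L\log L - \sum_{i=1}^D \lambda_i L\log(\lambda_i L) = L\log L - L\sum_{i=1}^D \lambda_i\log\lambda_i - L\log L\sum_{i=1}^D\lambda_i = -L\sum_{i=1}^D \lambda_i\log\lambda_i,
\]
and this equals $-\log\epsilon$ by the definition of $L$. The only mild subtlety in the plan is the implicit appeal to $\sum_d\lambda_d=1$; this is automatic here since $\mu$ is a probability measure and the $I_d$ form a partition up to a null set, so the Lagrange calculation goes through cleanly, and I do not expect any real obstacle.
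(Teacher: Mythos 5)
Your proposal is correct and follows essentially the same route as the paper: the paper verifies that all first directional derivatives of $\tilde{F}$ along lines in $\mathcal{H}_\epsilon$ vanish at $\p$ (equivalent to your Lagrange-multiplier computation, and likewise relying implicitly on $\sum_d \lambda_d = 1$), then invokes Proposition \ref{prop:F2nd} for uniqueness of the maximum and evaluates $\tilde{F}(\p)$ exactly as you do. The only cosmetic difference is that you solve the stationarity equations to derive $\p$ while the paper checks the announced point directly, and your passing claim that $\mu=-1$ is the only admissible multiplier (true by monotonicity of $\mu \mapsto \sum_j \lambda_j^{-\mu}$) is not even needed once strict concavity is in hand.
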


\begin{proof}
It is easy to see that $\p$ is on the hyperplane segment $\mathcal{H}_\epsilon$. Since all the directional second derivatives parallel to $\mathcal{H}_\epsilon$ are negative, it suffices to show that, at the point $\p$, all the directional first derivatives parallel to $\mathcal{H}_\epsilon$ are $0$. 

As before, consider a line $l(t) = (a_1t + \lambda_1 L , \dots, a_n t + \lambda_D L)$ passing through the point $\p$. We again have
\[
\sum_{i=1}^D a_i \log \lambda_i = 0.
\]

The directional derivative of $F$ at $p$ along this line (in the positive $t$ direction) is given by
\begin{align*}
&\left( \sum_{i=1}^D a_i \right) \log \left(  \sum_{i=1}^D  \lambda_i L  \right) - \sum_{i=1}^D \left(  a_i \log ( \lambda_i L)  \right)\\
&\qquad =\left( \sum_{i=1}^D a_i \right) \log \left(  \frac{\log(\epsilon) \sum_{i=1}^D \lambda_i}{\sum_{i=1}^D \lambda_i \log \lambda_i}   \right) - \sum_{i=1}^D \left(  a_i \log \left(  \frac{\log(\epsilon) \lambda_i}{\sum_{j=1}^D \lambda_j \log \lambda_j}   \right)\right)\\
&\qquad =\left( \sum_{i=1}^D a_i \right) \log \left(  \frac{\log(\epsilon)}{\sum_{i=1}^D \lambda_i \log \lambda_i}   \right) - \sum_{i=1}^D \left(  a_i \log \left(  \frac{\log(\epsilon)}{\sum_{j=1}^D \lambda_j \log \lambda_j}   \right)\right)\\
&\qquad \qquad - \sum_{i=1}^D a_i \log \lambda_i\\
&\qquad = 0.
\end{align*} 

This shows that $\p$ is the maximum.  The value $\tilde{F}$ takes at this point is given by
\begin{align*}
&\left( \sum_{i=1}^D \lambda_i L \right) \log \left( \sum_{i=1}^D \lambda_i L \right) - \sum_{i=1}^D \lambda_i L \log (\lambda_i L) \\
&\qquad = \frac{\log \epsilon}{\sum_{j=1}^D \lambda_j \log \lambda_j} \log\left(\frac{\log \epsilon}{\sum_{j=1}^D \lambda_j \log \lambda_j} \right)\\
&\qquad\qquad - \sum_{i=1}^D \frac{\lambda_i \log \epsilon}{\sum_{j=1}^D \lambda_j \log \lambda_j} \log \left( \frac{\lambda_i \log \epsilon}{\sum_{j=1}^D \lambda_j \log \lambda_j} \right)\\
&\qquad = -\sum_{i=1}^D \frac{\lambda_i  \log \epsilon}{\sum_{j=1}^D \lambda_j \log \lambda_j } \log \lambda_i \\
&\qquad = - \log \epsilon
\end{align*}
which completes the proof.

\end{proof}

We will abuse notation and consider $\x\in \mathcal{H}_\epsilon$ as being both the vector $(x_1,x_2, \dots, x_D)$ and the vector $(x_2,\dots,x_D)$ with implied extra variable
\[
x_1 = \frac{1}{\log \lambda_1} \left( \log \epsilon - \sum_{i=2}^n x_i \log \lambda_i \right).
\]
And likewise we will consider $\p\in\mathcal{H}_\epsilon$ as being both the vector $(\lambda_1 L, \lambda_2 L, \dots, \lambda_D L)$ and the vector $(\lambda_2 L, \lambda_3 L, \dots, \lambda_D L)$.

Therefore $F$ can be given by
\begin{align*}
F(\x)&= \left(  \frac{\log \epsilon}{\log \lambda_1} +\sum_{i=2}^D x_i \left(1 - \frac{\log \lambda_i}{\log \lambda_1} \right) \right) \log  \left(  \frac{\log \epsilon}{\log \lambda_1} +\sum_{i=2}^D x_i \left(1 - \frac{\log \lambda_i}{\log \lambda_1}\right) \right) \\
&\qquad -  \left(  \frac{\log \epsilon}{\log \lambda_1} -\sum_{i=2}^D x_i\frac{\log \lambda_i}{\log \lambda_1}\right) \log  \left(  \frac{\log \epsilon}{\log \lambda_1} -\sum_{i=2}^D x_i \frac{\log \lambda_i}{\log \lambda_1}\right)\\
&\qquad - \sum_{i=2}^D x_i \log x_i.
\end{align*}

Given $2\le i,j\le D$, we have
\begin{align*}
\frac{\partial^2}{\partial x_i \partial x_j} F(\x) &=  \frac{\left(1- \frac{\log\lambda_i}{\log\lambda_1} \right) \left( 1- \frac{\log \lambda_j}{\log\lambda_1}\right)}{\frac{\log \epsilon}{\log \lambda_1} +\sum_{i=2}^D x_i \left(1 - \frac{\log \lambda_i}{\log \lambda_1}\right) }\\
&\qquad - \frac{\frac{\log \lambda_i \log \lambda_j}{(\log \lambda_1)^2}}{ \frac{\log \epsilon}{\log \lambda_1} -\sum_{i=2}^D x_i \frac{\log \lambda_i}{\log \lambda_1}} - \frac{\delta_{j,k}}{x_i }
\end{align*}
So, if we consider the second partial derivatives at $p$ arranged in a matrix, then we see that there exists a fixed real symmetric matrix $A$ (independent of $\epsilon$), such that
\[
\frac{\partial^2}{\partial x_i \partial x_j} F(\p) = \frac{1}{\log \epsilon} A_{i-1,j-1}. 
\]

Since $(\log \epsilon)^{-1} A$ is a real symmetric matrix, it can be diagonalized by orthogonal matrices. In particular, this implies that there exist unit vectors $\uu_2, \uu_3, \dots, \uu_D\in \mathbb{R}^{D-1}$ and fixed eigenvalues $l_2, l_3, \dots, l_D$  (again not dependent on $\epsilon$) such that
\[
\frac{\partial^2}{\partial \uu_j \partial \uu_D} F(\p) = \begin{cases}
\dfrac{l_j}{\log \epsilon} & \text{if }j=k\\
0 & \text{otherwise.}
\end{cases}
\]
By Proposition \ref{prop:F2nd} the second directional derivatives must always be negative, so $l_j$ must be positive.

Consider a ball $\mathcal{B}_\epsilon$ around the point $\p$, given by
\[
\mathcal{B}_\epsilon = \left\{ \p + t_2 \uu_2 + \dots + t_D \uu_D \middle| \sum_{i=2}^D t_i^2 \le |\log \epsilon|^{2/3} \right\}.
\]
 Note that for sufficiently small $\epsilon$, we have $\mathcal{B}_\epsilon \subset \mathcal{H}_\epsilon$.
We also consider a box $B_\epsilon$ given by
\[
B_\epsilon = \left\{ \p + t_2 \textbf{e}_2 + \dots + t_D \textbf{e}_D \middle| |t_i|\le \frac{1}{\sqrt{D-1}} |\log \epsilon|^{2/3} \right\},
\]
where $\textbf{e}_i$ are the elementary basis vectors. We have that $B_\epsilon\subset \mathcal{B}_\epsilon$.

If $\x\in \mathcal{B}_\epsilon$, then each coordinate $x_i$ of $\x$ must be on the order of $|\log x|$. Therefore for all points $\x\in \mathcal{B}_\epsilon$, the third partial derivative of $\tilde{F}$ satisfies the following bound:
\[
\frac{\partial^3}{\partial \uu_j \partial \uu_D \partial \uu_l} F(\x)  \ll |\log \epsilon|^{-2}.
\]

By Taylor's Theorem, for any point $\x = \p+t_2 \uu_2 + \dots + t_D \uu_D\in \mathcal{B}_\epsilon$, we have
\begin{equation}\label{eq:Ftaylor}
F(\x) = -\log \epsilon + \sum_{i=2}^D \frac{l_i}{\log \epsilon} t_i^2 + O(1).
\end{equation}
Let us let $F_+$ and $F_-$ be given by
\[
F_+(\x)= -\log \epsilon + \left( \max_{2\le i\le D}\frac{l_i}{\log \epsilon}\right)  \sum_{i=2}^D  t_i^2 
\]
and
\[
F_-(\x)= -\log \epsilon + \left( \min_{2\le i\le D}\frac{l_i}{\log \epsilon} \right) \sum_{i=2}^D  t_i^2 
\]
so that
\[
F_-(\x) \le \tilde{F}(\x)  + O(1) \le F_+(\x) .
\]
The advantage of these functions is that because $\sum_{j=2}^D t_j^2$ is invariant under rotating around $\p$. If $\x = \p + y_2 \textbf{e}_2 + \dots + y_D \textbf{e}_D$ is in the box $B_\epsilon$, then
\[
F_+(\x)= -\log \epsilon + \left( \max_{2\le i\le n}\frac{l_i}{\log \epsilon}\right)  \sum_{i=2}^n  y_i^2 
\]
and likewise for $F_-$.

Moreover, for each point $\x$ outside of the \emph{box} $B_\epsilon$, we can draw a line between $\x$ and $\p$ and note that by Lemma \ref{lem:Fmax}, $F$ increases along the line as we move towards $\p$. Therefore, the value of $F$ at $\x\not\in B_\epsilon$ is at most the maximum of $F$ on the boundary of $B_\epsilon$, and by \eqref{eq:Ftaylor}, this is at most $-\log \epsilon - C|\log \epsilon|^{1/3}$ for some fixed \emph{positive} constant $C$.

\subsection{Returning to the full sum}

For points $\x\in B_\epsilon$, it is easy to see that $
G(\x)
$
is on the order of $|\log \epsilon|^{(3-D)/2}$ and for $\x\not\in B_\epsilon$, the value $G(\x)$ could be as large as $|\log \epsilon|$. Therefore,
\begin{align*}
\sum_{\m\in\mathcal{H}_\epsilon\setminus B_\epsilon} G(\m) \exp(F(\m)) & \ll \sum_{\m\in\mathcal{H}_\epsilon\setminus B_\epsilon} |\log \epsilon| \exp(-\log \epsilon - C|\log \epsilon|^{1/3})\\
&\ll |\log \epsilon|^{D} \exp(-\log \epsilon - C|\log \epsilon|^{1/3}) = o (\epsilon^{-1})
\end{align*}
Here we used the fact that $m_i  \ll |\log \epsilon|$.

Therefore 
\[
H(\epsilon) \asymp \sum_{\m \in B_\epsilon} G(\m) \exp(\tilde{F}(\m)) +o(\epsilon^{-1}).
\]
Since, as noted above $
G(\m)
$
is on the order of $|\log \epsilon|^{(3-D)/2}$ for $\m\in B_\epsilon$, to complete the proof it suffices to prove that
\[
\sum_{\m \in B_\epsilon}  \exp(F(\m)) \asymp \frac{|\log \epsilon|^{D-1}}{\epsilon}
\]

First we note that
\[
 \sum_{\m\in B_\epsilon}  \exp(F_-(\m)) \ll \sum_{\m \in B_\epsilon}  \exp(F(\vec{m}))  \ll \sum_{\m\in B_\epsilon}  \exp(F_+(\m)) .
\]
There exists a point $\p'$ within distance $\sqrt{D-1}/2$ from $\p$, such that $\p'$ is an integer lattice point. For $\m\in B_\epsilon$, we have $F_\pm(\m) + F_\pm(\m')= O(|\log \epsilon|^{-1/3}) = O(1)$. Moreover, each vector $\m'=\m+\p-\p'$ can be written as $\p+k_2 \mathbf{e}_2 + \dots + k_D \mathbf{e}_D\in B_\epsilon$ with each $k_i$ in the interval $I=[-c|\log \epsilon|^{2/3}-\sqrt{D-1}/2, c|\log \epsilon|^{2/3}+\sqrt{D-1}/2]$. Therefore
\begin{align}
\sum_{\m \in B_\epsilon}  \exp(F_+(\m)) &\asymp \sum_{\m \in B_\epsilon}  \exp(F_+(\m+\p-\p'))\\
&\le \frac{1}{\epsilon} \prod_{i=2}^D \left(  \sum_{k_i \in I} \exp\left(  \left( \max_{2\le i\le D}\frac{l_i}{\log \epsilon}\right) k_i^2\right)  \right),\label{eq:fplus}
\end{align}
and likewise \begin{align}\label{eq:fminus}
\sum_{\m \in B_\epsilon}  \exp(F_-(\m)) \gg \frac{1}{\epsilon} \prod_{i=2}^D \left(  \sum_{k_i \in J} \exp\left(  \left( \max_{2\le i\le D}\frac{l_i}{\log \epsilon}\right) k_i^2\right)  \right),
\end{align}
where $J=[-c|\log \epsilon|^{2/3}+\sqrt{D-1}/2, c|\log \epsilon|^{2/3}-\sqrt{D-1}/2]$. Applying Lemma \ref{lem:sumtointegral} to \eqref{eq:fplus} and \eqref{eq:fminus} completes the proof.


\begin{thebibliography}{10}

\bibitem{AM}
C.~Altomare and B.~Mance.
\newblock Cantor series constructions contrasting two notions of normality.
\newblock {\em Monatsh. Math.}, 164(1):1--22, 2011.

\bibitem{BC1}
David~H. Bailey and Richard~E. Crandall.
\newblock On the random character of fundamental constant expansions.
\newblock {\em Experiment. Math.}, 10(2):175--190, 2001.

\bibitem{BC2}
David~H. Bailey and Richard~E. Crandall.
\newblock Random generators and normal numbers.
\newblock {\em Experiment. Math.}, 11(4):527--546 (2003), 2002.

\bibitem{BM}
David~H. Bailey and Micha{\l} Misiurewicz.
\newblock A strong hot spot theorem.
\newblock {\em Proc. Amer. Math. Soc.}, 134(9):2495--2501 (electronic), 2006.

\bibitem{Champernowne}
D.~G. Champernowne.
\newblock The {C}onstruction of {D}ecimals {N}ormal in the {S}cale of {T}en.
\newblock {\em J. London Math. Soc.}, S1-8(4):254.

\bibitem{CE}
Arthur~H. Copeland and Paul Erd{\"o}s.
\newblock Note on normal numbers.
\newblock {\em Bull. Amer. Math. Soc.}, 52:857--860, 1946.

\bibitem{DK}
Karma Dajani and Cor Kraaikamp.
\newblock {\em Ergodic theory of numbers}, volume~29 of {\em Carus Mathematical
  Monographs}.
\newblock Mathematical Association of America, Washington, DC, 2002.

\bibitem{DE}
H.~Davenport and P.~Erd{\"o}s.
\newblock Note on normal decimals.
\newblock {\em Canadian J. Math.}, 4:58--63, 1952.

\bibitem{dB}
N.~G. de~Bruijn.
\newblock {\em Asymptotic methods in analysis}.
\newblock Dover Publications Inc., New York, third edition, 1981.

\bibitem{MM}
M.~Madritsch and B.~Mance.
\newblock Construction of $\mu$-normal sequences.
\newblock {\em arXiv:1206.4950}, 2012.

\bibitem{M2}
Bill Mance.
\newblock Construction of normal numbers with respect to the {$Q$}-{C}antor
  series expansion for certain {$Q$}.
\newblock {\em Acta Arith.}, 148(2):135--152, 2011.

\bibitem{M1}
Bill Mance.
\newblock Cantor series constructions of sets of normal numbers.
\newblock {\em Acta Arith.}, 156(3):223--245, 2012.

\bibitem{Martinelli}
F.~J. Martinelli.
\newblock Construction of generalized normal numbers.
\newblock {\em Pacific J. Math.}, 76(1):117--122, 1978.

\bibitem{MS}
N.~G. Moshchevitin and I.~D. Shkredov.
\newblock On the {P}yatetski\u\i-{S}hapiro criterion for normality.
\newblock {\em Mat. Zametki}, 73(4):577--589, 2003.

\bibitem{Schweiger}
Fritz Schweiger.
\newblock {\em Ergodic theory of fibred systems and metric number theory}.
\newblock Oxford Science Publications. The Clarendon Press Oxford University
  Press, New York, 1995.

\end{thebibliography}
\end{document}